\newtheorem{theorem}{Theorem}
\newtheorem{cor}[theorem]{Corollary}
\theoremstyle{definition}
\newtheorem{conjecture}[theorem]{Conjecture}
\theoremstyle{remark}
\DeclareMathOperator{\cc}{cc}
\DeclareMathOperator{\cp}{cp}
\DeclareMathOperator{\bc}{bc}
\DeclareMathOperator{\bp}{bp}
\DeclareMathOperator{\scc}{scc}	
\DeclareMathOperator{\scp}{scp}
\DeclareMathOperator{\E}{\mathbf{E}}
\title{\bf Sigma clique covering  of graphs}
\author{Akbar Davoodi \qquad Ramin Javadi \qquad Behnaz Omoomi\\[4pt]%\thanks{Supported by NASA grant ABC123.}\\
\small Department of Mathematical Sciences\\[-0.8ex]
\small Isfahan University of Technology\\[-0.8ex] 
\small 84156-83111, Isfahan, Iran\\
}
\date{}
\begin{document}

\maketitle

\begin{abstract}
 The {sigma clique cover number} (resp. {sigma clique partition number}) of graph $G$, denoted by $\scc(G)$ (resp. $\scp(G)$), is defined as the smallest integer $k$ for which 
there exists a collection of cliques of $G$, covering (resp. partitioning) all edges of $G$ such that the sum of sizes of the cliques is at most $k$.
%It is proved that for every graph $G$ on $n$ vertices, $\scp(G) \leq n^2/2$. 
In this paper, among some results we provide some tight bounds for $\scc$ and $\scp$. 

  % keywords are optional
  \bigskip\noindent \textbf{Keywords:} clique covering; clique partition; sigma clique covering; sigma clique partition; set intersection representation; set system.
%\textbf{MSC:} 05C70,05C62, 05C69, 05D05,05B05
\end{abstract}
\section{Introduction}
Throughout the paper, all graphs are simple and undirected. By a \textit{clique} of a graph $G$, we mean a subset of mutually adjacent vertices of $ G $ as well as its corresponding complete subgraph. The \textit{size} of a clique is the number of its vertices. 
Also, a \textit{biclique} of $G$ is a complete bipartite subgraph of $G$.  A \textit{clique covering } (resp. \textit{biclique covering}) of $G$ is defined as a family of cliques (resp. bicliques) of $G$ such that every edge of $G$ lies in at least one of the cliques (resp. bicliques) comprising this family. A clique (resp. biclique) covering in which each edge belongs to exactly one clique (resp. biclique), is called a \textit{clique {\em (resp.} biclique{\em)} partition}. The minimum size of a clique covering, a biclique covering, a clique partition and a biclique partition of $G$ are called \textit{clique cover number}, \textit{biclique cover number}, \textit{clique partition number} and \textit{biclique partition number} of $G$ and are denoted by $\cc(G), \bc(G), \cp(G)$ and $\bp(G)$, respectively. 

The subject of clique covering has been  widely studied in recent decades. First time, Erd\H os et al. in \cite{erdos66} presented a close relationship between the clique covering and the set intersection representation.  Also, they proved that the clique partition number of a graph on $ n $ vertices cannot exceed ${n^2}/{4} $ (known as Erd\H os-Goodman-P{\'o}sa theorem).
The connections of clique covering and other combinatorial objects have been explored (see e.g. \cite{Wallis87,Rees}). For a survey of the classical results on the clique and biclique coverings see \cite{pullman82,monson95}.

Chung et al. in \cite{chung} and independently Tuza in \cite{tuza} considered a weighted version of the biclique covering. In fact, given a graph $G$, they were concerned with minimizing $\sum_{B\in {\cal B}} |V(B)|$ among all biclique coverings $\mathcal{B}$ of $G$. They proved that every graph on $n$ vertices has a biclique covering such that the sum of number of vertices of these bicliques is $O(n^2/\log n)$ \cite{chung,tuza}. Furthermore, a clique counterpart of weighted biclique cover number has been studied. Following a conjecture by Katona and Tarjan, Chung \cite{chung81}, Gyori and Kostochka \cite{gyori80} and Kahn \cite{kahn81}, independently, proved that every graph on $n$ vertices has a clique partition such that the sum of number of vertices in these cliques is at most $n^2/2$. This can be considered as a generalization of Erd\H os-Goodman-P{\'o}sa theorem.

In this paper, we are concerned with a weighted version of the clique cover number. Let $G$ be a  graph.  The \textit{sigma clique cover number} of $G$, denoted by $\scc(G)$, is defined as the minimum integer $k$ for which there exists a clique covering $\mathcal{C}$ of $G$, such that the sum of its clique sizes is at most $k$. 
For a clique covering $\mathcal{C}$ of a graph $G$ and a vertex $u\in V(G)$, let the \textit{valency} of $u$ (with respect to $\mathcal{C}$), denoted by $\mathcal{V}_\mathcal{C}(u)$, be the number of cliques in $\mathcal{C}$ containing $u$. In fact, 
\[\scc(G)=\min_{\mathcal{C}} \sum_{C\in \mathcal{C}} |C|=\min_{\mathcal{C}} \sum_{u\in V(G)} \mathcal{V}_\mathcal{C}(u),\]
where the minimum is taken over all clique coverings of $G$.
Analogously, one can define \textit{sigma clique partition number} of $G$, denoted by $\scp(G)$. As a matter of fact, the above-mentioned result in \cite{chung81,kahn81,gyori80} states that for every graph $G$ on $n$ vertices, $\scp(G)\leq n^2/2$. 

%Let $G_n$ be the graph on the set $V(G)=\{x_1,\ldots, x_n, y_1,\ldots, y_n\}$, where $x_ix_j, y_i y_j\in E(G)$, for every $i\neq j$. Also, $x_i y_j\in E(G)$, if and only if $i\leq j$. The cliques $C_i:=\{x_i, y_i, y_{i+1},\ldots , y_n\}$, $1\leq i\leq n-1$, along with the clique $C_n:=\{y_n, x_1,\ldots, x_n\}$ form a  clique covering $\mathcal{C}$, where $|\mathcal{C}|\leq n$. Thus, $\cc(G_n)\leq n$. But $\sum_{i=0}^n|C_i|=n(n+5)/2$. ?????????

In order to reveal inherent difference between $\cc(G)$ and $\scc(G)$, we introduce a similar parameter  $\scc'(G)$ which is defined as the minimum of the sum of clique sizes in a clique covering $\mathcal{C}$ achieving $\cc(G)$, i.e. 
\[\scc'(G):= \min\left\{\sum_{C\in \mathcal{C}} |C| \ : \  \mathcal{C} \text{ is a clique covering of $G$ and } |\mathcal{C}|=\cc(G) \right\}.\] 

It is evident that $\scc(G)\leq \scc'(G)$.
In Section~\ref{sec:bound}, first  in Theorem~\ref{thm:scc'}, we will see that  for some classes of graphs $G$, the quotient $\scc'(G)/\scc(G)$ can be arbitrary large. Then,
 we give some general bounds on the sigma clique cover number and the sigma clique partition number. In particular, we prove that if $G$ is a graph on $n$ vertices with no isolated vertex and the maximum degree of the complement of $G$ is $ d-1 $, for some integer $d$, then
$ \scc(G)\leq  cnd\lceil\log \left(({n-1})/{(d-1)}\right)\rceil$, where $c$ is a constant. We conjecture that this upper bound is best up to a constant factor for large enough $ n $. In Section~\ref{sec:ctp}, using a well-known result by Bollob{\'a}s, we prove the correctness of this conjecture for $ d=2 $. In other words, we show that for every even integer $n$, if $G$ is the complement of an induced matching on $n$ vertices, then $\scc(G)\sim n\log n$. Finally, in Section~\ref{sec:cr} we give an interpretation of this conjecture as an interesting set system problem.

%In section~\ref{sec:ctp}, we investigate the sigma clique cover number of the Cocktail party graphs.

\section{Some Bounds}\label{sec:bound}

In this section, first we present a class of graphs  for which the family of clique coverings achieving $\cc(G)$ is disjoint from the family of clique coverings achieving $\scc(G)$. 
Then, we provide several inequalities relating the introduced clique covering parameters. Moreover, we present an upper bound for $\scc(G)$ in terms of the number of vertices and the maximum degree of the complement of $G$.

\begin{theorem}\label{thm:scc'}
There exists a sequence of graphs $\{G_n\}$ such that $\scc'(G_n)/\scc(G_n)$ tends to infinity as $n$ tends to infinity.
\end{theorem}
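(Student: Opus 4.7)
The plan is to exhibit an explicit family $\{G_n\}$ for which $\scc'(G_n)/\scc(G_n)$ diverges. Since $\scc'(G) \geq \scc(G)$ always, the task reduces to building graphs where insisting on the minimum number of cliques is structurally incompatible with minimizing the total size, i.e. where the minimum-cardinality covering is forced to include large cliques whose participation in the sum is unavoidable.

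I would construct each $G_n$ around a designated large clique $K$ of size $\Theta(n)$ together with auxiliary ``gadgets'' attached to its vertices. The gadgets are chosen with two properties in mind: first, each gadget forces at least one clique into every covering, so that achieving the minimum cardinality $\cc(G_n)$ leaves at most a single ``extra'' clique available to handle $K$'s edges; second, each gadget, taken together with a portion of $K$'s vertices, admits a small clique that simultaneously covers the gadget and part of $K$. The first property ensures that every $\cc$-optimal covering must contain $K$ itself as one of its cliques, yielding $\scc'(G_n) = \Omega(n)$; the second property is what permits a more refined (but larger-cardinality) covering with a much smaller total size.

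The verification would proceed in three parts. First, I would compute $\cc(G_n)$ by showing that the gadgets force $\cc(G_n)-1$ cliques, leaving one clique to cover $K$'s interior edges. Second, I would prove the lower bound $\scc'(G_n) = \Omega(n)$ by arguing that any decomposition of $K$ into smaller sub-cliques must introduce at least one additional clique, violating the cardinality bound; consequently every covering with exactly $\cc(G_n)$ cliques uses $K$ undivided and pays $|K| = \Theta(n)$. Third, I would construct an explicit alternative covering using $\cc(G_n) + O(1)$ cliques whose total size is $o(n)$, by replacing the big clique $K$ with many small cliques that each merge with a gadget. Combining these three steps, the ratio $\scc'(G_n)/\scc(G_n)$ diverges as $n \to \infty$.

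The main obstacle lies in the second part: establishing that the large clique $K$ is indecomposable within a covering of exactly $\cc(G_n)$ cliques. This requires a careful extremal argument, typically by showing that each gadget contributes at least one ``essential'' clique to any covering and that these essential cliques are pairwise disjoint in a sense that exhausts the budget $\cc(G_n)-1$, leaving no room to split $K$ into smaller pieces. Proving this rigorously—and designing gadgets that simultaneously admit the efficient alternative covering required in the third step—is the most delicate aspect of the construction.
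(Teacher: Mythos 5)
Your overall framework---finding graphs in which every covering of minimum cardinality is structurally forced to be expensive, while a covering with a few more cliques is cheap---is exactly the right idea and matches the paper's strategy. However, the quantitative design of your construction cannot work. You force a \emph{single} large clique $K$ with $|K|=\Theta(n)$ into every $\cc$-optimal covering, and conclude $\scc'(G_n)=\Omega(n)$; but every non-isolated vertex of $G_n$ must lie in at least one clique of \emph{any} clique covering, so $\scc(G_n)$ is at least the number of non-isolated vertices, which is at least $|K|=\Theta(n)$. Hence your third step (an alternative covering of total size $o(n)$) is impossible, and even after correcting it to $O(n)$ the ratio $\scc'(G_n)/\scc(G_n)$ you obtain is $\Omega(n)/\Theta(n)=O(1)$: it does not diverge. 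Forcing one undivided big clique buys you nothing beyond the trivial lower bound that every vertex is covered once.

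To make the ratio diverge you must force the minimum-cardinality covering to have large total \emph{valency}: many cliques that pairwise share a large common vertex set, so that the forced covering's total size is superlinear in the number of vertices. This is what the paper's construction achieves: it builds $G_n$ on $3n+2$ vertices with cliques $X\cup\{x_0\}$, $Y\cup\{y_0\}$, $Z$ (each part of size $n$), a perfect matching between $X$ and $Y$, and complete adjacency from $Z$ to $X\cup Y$. A counting argument on the $n+2$ edges $\{x_iy_i\}\cup\{x_0x_1,y_0y_1\}$, no two of which fit in one clique, shows $\cc(G_n)=n+2$ and that the \emph{unique} covering of that size consists of $\{x_0\}\cup X$, $\{y_0\}\cup Y$, and the $n$ cliques $\{x_i,y_i\}\cup Z$. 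Each of the latter contains all of $Z$, so the vertices of $Z$ each have valency $n$ and $\scc'(G_n)=n^2+4n+2$, while a covering with $n+4$ cliques (replacing the $Z$-cliques by $X\cup Z$, $Y\cup Z$ and the edges $\{x_i,y_i\}$) shows $\scc(G_n)\leq 8n+2$. Your gadget approach would need to be redesigned so that the forced cliques overlap in this way; as written, the single-clique bottleneck is the gap.
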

\begin{proof}
Let $n$ be a positive integer and $G_n$ be a graph on $3n+2$ vertices, such that  $V(G_n)=\{x_0,y_0\}\cup X\cup Y\cup Z$,  where $X=\{x_1,\ldots, x_n\}$, $Y=\{y_1,\ldots, y_n\}$ and $Z=\{z_1,\ldots, z_n\}$ and adjacency is as follows. The sets $X\cup \{x_0\}$, $Y\cup \{y_0\}$ and  $Z$ are three cliques and every vertex in $Z$ is adjacent to every vertex in $X\cup Y$.  Moreover, for all $i,j\in\{1,\ldots, n\}$, $x_i$ is adjacent to $y_j$ if and only if $i=j$ (see Figure~\ref{fig:gn}).
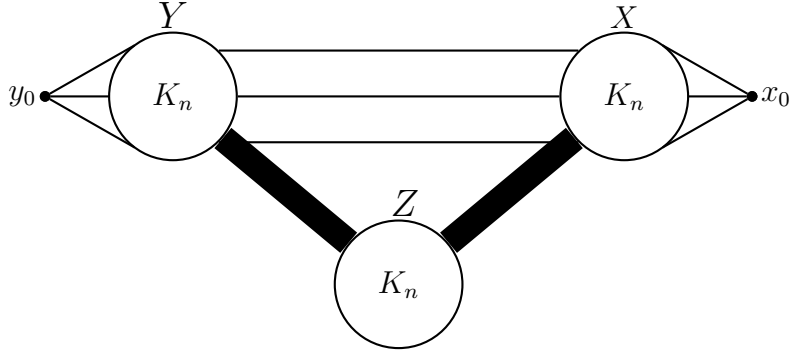
\begin{figure}
\begin{center}
\begin{tikzpicture}
[inner sep=0.4mm, place/.style={circle,draw=black,thick}]
%\node[place,fill=black] (x1) at (3,3) [label=\footnotesize $x_1$] {};
%\node[place] (x2) at (3.5,2.7) [label=\footnotesize $x_2$] {} edge [-,thick] node[auto] {} (x1);
%\node (xdots) at (3.5,2.2) [label=center:\rotatebox{50}{$\cdots$}] {};
%\node[place] (xn) at (3,2) [label=below:\footnotesize $x_n$] {} edge [-,thick] node[auto] {} (x1) edge [-,thick] node[auto] {} (x2);
\node[place] (x0) at (4.7,2.5) [label=right:$x_0$] {} edge [-,thick] node[auto] {} (3.51,3.18) edge [-,thick] node[auto] {} (3.85,2.5) edge [-,thick] node[auto] {} (3.51,1.82);
\node[place] (y0) at (-4.7,2.5) [label=left:$y_0$] {} edge [-,thick] node[auto] {} (-3.51,3.18) edge [-,thick] node[auto] {} (-3.85,2.5) edge [-,thick] node[auto] {} (-3.51,1.82);

%_______________________________________________________________________
\node[place,inner sep=6mm] (X) at (3,2.5) [label= center: $K_n$, label= above:$X$] {};
\node[place,inner sep=6mm] (Y) at (-3,2.5) [label= center: $K_n$, label=above:\large $Y$] {};
\draw [thick] (X) -- (Y);
\draw [thick] (X.north west)  -- (Y.north east);
\draw [thick] (X.south west)  -- (Y.south east);
\node[place,inner sep=6mm] (Z) at (0,0) [label= center: $K_n$, label=above:\large $\ Z$] {}edge [-,line width=10pt] node[auto] {} (X)edge [-,line width=10pt] node[auto] {} (Y);

%_________Y
%\node[place,fill=black] (y1) at (-3,3) [label=\footnotesize $y_1$] {} edge [-,thick] node[auto] {} (x1);
%\node[place] (y2) at (-3.5,2.7) [label=\footnotesize $y_2$] {} edge [-,thick] node[auto] {} (y1)edge [-,thick] node[auto] {} (x2);
%%\node (ydots) at (-3.5,2.2) [label=center:\rotatebox{-50}{$\cdots$}] {};
%\node (dots) at (0,2.4) [label=center:{$\vdots$}] {};
%\node[place] (yn) at (-3,2) [label=below:\footnotesize $y_n$] {} edge [-,thick] node[auto] {} (y1) edge [-,thick] node[auto] {} (y2)edge [-,thick] node[auto] {} (xn);
%\node[place] (y0) at (-4.8,3.3) [label=left:$y_0$] {} edge [-,thick] node[auto] {} (y1) edge [-,thick] node[auto] {} (y2) edge [-,thick] node[auto] {} (yn);
%_______________________Z___________________
%\node[place,fill=black] (z1) at (-.8,.2) [label=below:\footnotesize $z_1$] {};
%\node[place] (z2) at (-.3,-.2) [label=below:\footnotesize $z_2$] {} edge [-,thick] node[auto] {} (z1);
%\node (zdots) at (.3,-.3) [label=center:\rotatebox{28}{$\cdots$}] {};
%\node[place] (zn) at (.7,.2) [label=below:\footnotesize $z_n$] {} edge [-,thick] node[auto] {} (z1) edge [-,thick] node[auto] {} (z2);
%\rotatebox{37}{\draw[fill=black] (4.8,3.4) rectangle (6.8,3.8);} % Z to X
%\rotatebox{-37}{\draw[fill=black] (-1.9,-5.4) rectangle (-3.9,-5);} % Z to Y
%___________________________________________
\foreach \point in {x0,y0}
\fill [black](\point) circle (2pt);
\end{tikzpicture}
\end{center}
\caption{The graph $G_n$.}\label{fig:gn}
\end{figure}

First, note that each clique of $G_n$ covers at most one edge from the set $\{x_iy_i\ :\ 1\leq i\leq n\}\cup \{x_0x_1,y_0y_1\}$. This yields $\cc(G_n)\geq n+2$. 
Now, we show that $G_n$ has a unique clique covering containing  exactly $n+2$ cliques.  Let $\mathcal{C}$ be a clique covering of $G_n$ consisting of  $n+2$ cliques. Assume that the clique $C_i\in \mathcal{C}$ covers the edge $x_i y_i$, for $1\leq i\leq n$, and the cliques $C_{n+1}\in\mathcal{C}$ and $C_{n+2}\in \mathcal{C}$ cover the edges $y_0y_1$ and  $x_0x_1$, respectively. Note that $C_{n+2}\subseteq \{x_0\}\cup X$ and $x_0\not \in \cup_{i=1}^{n+1} C_i$. Therefore, $C_{n+2}=\{x_0\}\cup X$. Similarly, $C_{n+1}=\{y_0\}\cup Y$. Also, we have $x_j, y_j\not\in C_i$, for every $1\leq i\neq j\leq n$. Thus, $C_i=\{x_i,y_i\}\cup Z$, $1\leq i\leq n$. Hence, the clique covering $\mathcal{C}=\{C_i\ :\ 1\leq i\leq n+2\}$ is the unique clique covering of $G_n$ with $n+2$ cliques and then $\cc(G_n)=n+2$. Consequently,
\[\scc'(G_n)=\sum_{C\in\mathcal{C}} |C| = n(n+2)+2(n+1)= n^2+4n+2. \]
On the other hand, the $n+4$ cliques $\{x_0\}\cup X$, $\{y_0\}\cup Y$, $X\cup Z$, $Y\cup Z$ and $\{x_i,y_i\}$, $1\leq i\leq n$, form a clique covering $\mathcal{C}'$ and thus, 
\[\scc(G_n)\leq \sum_{C\in \mathcal{C}'}|C|= 2(n+1)+2(2n)+2n= 8n+2.\]
 Hence, the families of the optimum clique coverings  achieving  $\cc(G_n)$ and $\scc(G_n)$ are disjoint and $\scc'(G_n)/\scc(G_n)$ tends to infinity.  
\end{proof}

In the following, we prove some relations between $\scc(G)$, $\scp(G)$ and $\cp(G)$. 

\begin{theorem}
If $ G $ is a graph with $ m $ edges and  $ \omega(G) $ is the clique number of $G$, then
\begin{enumerate}
\item[{\rm i)}] $\dfrac{2m}{\omega(G)-1}\leq\scc(G)\leq\scp(G)\leq 2m,$
\item[{\rm ii)}] $\dfrac{\scp^2(G)}{2m+\scp(G)}\leq \cp(G).$
\end{enumerate}
Also,  in all relations, the equalities hold for the triangle-free graphs.  
\end{theorem}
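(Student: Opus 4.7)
The plan is to prove (i) from right to left and then derive (ii) from a Cauchy--Schwarz estimate applied to the clique partition that realises $\cp(G)$.

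For the upper bound $\scp(G)\le 2m$ in (i), I would exhibit the trivial clique partition into single edges: it contains $m$ cliques of size $2$, yielding a size-sum of exactly $2m$. The middle inequality $\scc(G)\le\scp(G)$ is immediate from the fact that every clique partition is also a clique covering. For the lower bound, I would use the elementary observation that a clique $C$ of $G$ covers $\binom{|C|}{2}$ edges while contributing $|C|$ to the size-sum, with ``price per edge'' equal to $|C|/\binom{|C|}{2}=2/(|C|-1)\ge 2/(\omega(G)-1)$. Summing this over any clique covering $\mathcal{C}$ and using the fact that each edge is covered at least once gives
\[
\sum_{C\in\mathcal{C}}|C|\;\ge\;\frac{2}{\omega(G)-1}\sum_{C\in\mathcal{C}}\binom{|C|}{2}\;\ge\;\frac{2m}{\omega(G)-1},
\]
which yields the desired bound on $\scc(G)$.

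For (ii), let $\mathcal{C}=\{C_1,\dots,C_t\}$ be a clique partition of $G$ with $t=\cp(G)$, and set $s:=\sum_i|C_i|$. Two facts drive the argument: first, because $\mathcal{C}$ is a partition, $\sum_i\binom{|C_i|}{2}=m$, equivalently $\sum_i|C_i|^2=2m+s$; second, because $\mathcal{C}$ is a clique partition, $s\ge\scp(G)$ by definition. Applying the Cauchy--Schwarz inequality $(\sum_i|C_i|)^2\le t\sum_i|C_i|^2$ then gives
\[
s^2\;\le\;\cp(G)\bigl(2m+s\bigr),\qquad\text{so}\qquad \cp(G)\;\ge\;\frac{s^2}{2m+s}.
\]
Since the function $x\mapsto x^2/(2m+x)$ has positive derivative for $x\ge 0$ and $s\ge\scp(G)$, the right-hand side is minimised when $s=\scp(G)$, producing $\cp(G)\ge\scp^2(G)/(2m+\scp(G))$.

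Finally, for the equality case I would observe that if $G$ is triangle-free then $\omega(G)=2$ and the only cliques of $G$ are vertices and edges, so every clique covering/partition that covers the edges must use each edge itself. Hence $\scc(G)=\scp(G)=2m$ and $\cp(G)=m$, and one checks directly that $2m/(\omega(G)-1)=2m$ and $\scp^2(G)/(2m+\scp(G))=(2m)^2/(4m)=m=\cp(G)$. I do not anticipate any significant obstacle: the only point that deserves care is the monotonicity of $x^2/(2m+x)$, which is what allows replacing the unknown quantity $s$ by $\scp(G)$ in the final step of (ii).
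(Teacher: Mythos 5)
Your proposal is correct and follows essentially the same route as the paper: the trivial edge partition for the upper bound, a per-clique count of covered edges versus size for the lower bound in (i), and Cauchy--Schwarz applied to an optimal clique partition together with a monotonicity argument (you use $x\mapsto x^2/(2m+x)$ where the paper uses $x\mapsto x^2/t-x$, a cosmetic difference) for (ii). The equality discussion for triangle-free graphs also matches what the paper asserts.
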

\begin{proof}
i) Since the collection of all edges of $G$ is a clique partition for $G$, we have $ \scc(G)\leq\scp(G)\leq2m$. Now, suppose  that $ \cal C $ is a clique covering of $ G $ such that $ \sum_{C\in\cal C}|C|=\scc(G). $ Clearly $ m\leq\sum_{C\in{\cal C}} {\binom{|C|}{2}} $. Hence,
\[2m\leq\sum_{C\in{\cal C}}|C|^2 - \scc(G)\leq(\omega(G)-1)\scc(G) .\]

ii)
Let $\cp(G)=t$ and $\{C_1,\ldots, C_t\}$ be a clique partition of $G$. Then, $m=\sum_{i=1}^{t}{\binom{|C_i|}{2}}$.
Thus,
\begin{align*}
2m&=\sum_{i=1}^{t} |C_i|^2 -\sum_{i=1}^{t} |C_i|\\
&\geq \frac{1}{t}(\sum_{i=1}^{t} |C_i|)^2 - \sum_{i=1}^{t}|C_i|\\%\tag{Cauchy-Schwarz inequality\footnotemark}\\
&\geq\frac{1}{t}\scp^2(G)-\scp(G),
\end{align*}
%\footnotetext{$(\sum_{i=1}^n a_i^2)(\sum_{i=1}^n b_i^2)\geq(\sum_{i=1}^n a_ib_i)^2$}
where the second inequality is due to Cauchy-Schwarz inequality and the last inequality holds because the function $f(x)=\frac{1}{t}x^2-x$ is increasing for $x\geq\frac{t}{2}$ and clearly $\scp(G)\geq \cp(G)=t$.
\end{proof}

For a vertex $u\in V(G)$, let $N_G(u)$ denotes the set of all neighbours of $u$ in $G$ and let $\overline{G}$ stand for the complement of $G$. Moreover, let $\Delta(G)$  be the maximum degree of $G$. Alon in \cite{alon} proved that if $G$ is a graph on $n$ vertices  and $\Delta(\overline{G})=d$, then $\cc(G)=O(d^2\log n)$.	In the following, modifying the idea of Alon, we stablish an upper bound for $\scc(G)$.
\begin{theorem} \label{thm:main}
If $ G $ is a graph on $ n $ vertices with no isolated vertex and $ \Delta(\overline{G})=d-1$, then
\begin{equation} \label{eq:scc}
 \scc(G)\leq (e^2+1)nd\left\lceil\ln\left(\frac{n-1}{d-1}\right)\right\rceil.  
 \end{equation}
\end{theorem}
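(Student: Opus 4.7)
The plan is to mimic Alon's probabilistic argument for $\cc(G)=O(d^2\log n)$ but keep track of the sum of clique sizes rather than merely the number of cliques. Set $p:=1/d$ and generate $k$ independent random subsets $S_1,\ldots,S_k\subseteq V(G)$ by placing every vertex into $S_i$ independently with probability $p$. For each sample define
\[ C_i := \{\,v\in S_i : N_{\overline{G}}(v)\cap S_i = \emptyset\,\}. \]
Any two vertices of $C_i$ must be adjacent in $G$, since otherwise each would witness a non-neighbour of the other inside $S_i$; hence every $C_i$ is automatically a clique of $G$.

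Next I would control edge coverage. Fix an edge $uv\in E(G)$. Because $uv\in E(G)$, the set $N_{\overline{G}}(u)\cup N_{\overline{G}}(v)$ is disjoint from $\{u,v\}$ and has cardinality at most $2(d-1)$, giving
\[ \Pr[\{u,v\}\subseteq C_i] \geq p^2(1-p)^{2(d-1)} \geq \frac{1}{e^2 d^2}, \]
where the last inequality uses the standard fact $(1-1/d)^{d-1}\geq 1/e$ for $d\geq 2$. Choosing $k:=\lceil e^2 d^2 \ln((n-1)/(d-1))\rceil$ independent rounds, the expected number of edges left uncovered by $C_1,\ldots,C_k$ is at most
\[ m\Bigl(1-\tfrac{1}{e^2 d^2}\Bigr)^k \leq \binom{n}{2}\cdot\frac{d-1}{n-1} = \frac{n(d-1)}{2}. \]

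I would then complete the collection to a bona fide clique covering of $G$ by appending each uncovered edge as a clique of size $2$. The expected size of $C_i$ satisfies $\E[|C_i|]=\sum_v p(1-p)^{|N_{\overline{G}}(v)|}\leq np = n/d$, so the $k$ random cliques contribute at most $kn/d\leq e^2 nd\lceil\ln((n-1)/(d-1))\rceil$ to the total in expectation, while the patch-up edges add at most $n(d-1)\leq nd$. The hypothesis that $G$ has no isolated vertex forces $d-1\leq n-2$, so $\lceil\ln((n-1)/(d-1))\rceil\geq 1$, meaning the extra $nd$ is dominated by the logarithmic term and the expected total size is at most $(e^2+1)nd\lceil\ln((n-1)/(d-1))\rceil$. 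Some realization of the $S_i$ therefore witnesses \eqref{eq:scc}.

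The step I expect to require the most care is the joint calibration of $p$ and $k$: $p$ must be small enough that $\E[|C_i|]\leq n/d$, yet large enough that each edge is captured with probability at least $\Omega(1/d^2)$, and $k$ is then forced to be $\Theta(d^2\ln((n-1)/(d-1)))$ precisely so that the residual edges number at most $n(d-1)/2$, the exact amount that can be absorbed into the claimed bound. The degenerate case $d=1$ (where $G=K_n$) is handled trivially by a single clique of size $n$.
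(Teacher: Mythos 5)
Your proposal is correct and follows essentially the same route as the paper: the same random greedy clique construction with $p=1/d$, the same number $t=\lceil e^2d^2\ln((n-1)/(d-1))\rceil$ of independent rounds, and the same completion by the uncovered edges, with identical bookkeeping of the expected total size. The only nitpick is that your intermediate claim $kn/d\le e^2nd\lceil\ln((n-1)/(d-1))\rceil$ can fail by up to $n/d$ (since $\lceil xL\rceil$ need not be at most $x\lceil L\rceil$), but this is harmlessly absorbed by the slack of $n$ you left when bounding the patch-up cost $n(d-1)$ by $nd$, so the final bound stands.
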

\begin{proof}
Let $0<p<1$ be a fixed number and  let $ S $ be a random subset of $V(G)$ defined by choosing every vertex $u$ independently with probability $p$. For every vertex $u\in S$, if there exists a non-neighbour of $u$ in $S$, then remove $u$ from $S$. The resulting set is a clique of $G$. Repeat this procedure  $ t $ times, independently, to get $ t $ cliques $ C_1, C_2,\ldots, C_t $ of $ G $. 

Let $ F $ be the set of all the edges which are not covered by the cliques $C_1,\ldots, C_t$. 
For every edge $uv$, using inequality $(1-\alpha)\leq e^{-\alpha}$, we have
\[\Pr(uv\in F)= \left(1- p^2 (1-p)^{|N_{\overline{G}}(u)\cup N_{\overline{G}}(v)|}\right)^t\leq (1-p^2(1-p)^{2(d-1)})^t\leq e^{-tp^2(1-p)^{2(d-1)}}. \]

The cliques $C_1,\ldots, C_t$ along with all edges in $F$ comprise a clique covering of $G$.
Hence,  
\begin{align*}
\scc(G)&\leq \E \left(\sum_{i=1}^t |C_i|+2 |F|\right)\\
&\leq npt+2{\binom{n}{2}}e^{-tp^2(1-p)^{2(d-1)}}. 
%&\leq npt+n(n-1)e^{(\frac{-t}{e^2(d+1)^2})}\nonumber
\end{align*}
Now, set $p:=1/d$. Since $(1-1/d)^{d-1}\geq 1/e$, we have
\[\scc(G)\leq \frac{nt}{d}+ n(n-1) e^{-td^{-2}e^{-2}}. \]
Finally, by setting $ t:=\lceil e^2d^2\ln(\frac{n-1}{d-1})\rceil>0$, we have
\begin{align*}
 \scc(G)&\leq   \frac{n(e^2d^2\ln(\frac{n-1}{d-1})+1)}{d}+n(d-1)  \\
 &\leq nd\left\lceil \ln\left(\frac{n-1}{d-1}\right)\right\rceil
 \left(e^2+\frac{1}{\left\lceil \ln\left(\frac{n-1}{d-1}\right)\right\rceil}\right)\\
& \leq nd\left\lceil \ln\left(\frac{n-1}{d-1}\right)\right\rceil \left(e^2+1\right).
\end{align*}
\end{proof}

The upper bound in \eqref{eq:scc} gives rise to the question that for positive integers $n,d$, how large can be the sigma clique cover number of an $n-$vertex graph where the maximum degree of its complement is $d-1$. A first candidate for graphs with large scc is the family of complete mulipartite graphs. 

For positive integers $n,k$, an \textit{orthogonal array}  OA$(n, k)$ is an $n^2 \times k$ array of elements in $\{1,\dots,n\}$, such that in every two columns each ordered pair $(i,j)$, $1\leq i,j \leq n$, appears exactly once. 
\begin{theorem}\label{prop:turan}
For positive integers $n,d$ with $n\geq 2d$, let $G$ be a complete multipartite graph on $n$ vertices with at least two parts of size $d$ and the other parts of size at most $d$. Then, $\Delta(\overline{G})=d-1$ and   $\scc(G)\geq nd$. Moreover, if $d$ is a prime power and $n\leq d(d+1)$, then $\scc(G)=\scp(G)=nd$.
\end{theorem}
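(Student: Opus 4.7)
The plan is to handle the three claims of the theorem in turn. The statement about $\Delta(\overline G)$ is the quickest: $\overline G$ is the vertex-disjoint union of the complete graphs $\overline G[V_i]$ on the parts of $G$, and by hypothesis the largest part has size exactly $d$, so $\Delta(\overline G)=d-1$.

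For the lower bound $\scc(G)\ge nd$, the plan is to show that every vertex $v$ lies in at least $d$ members of any given clique covering $\mathcal C$. Since $G$ has two parts $V_a,V_b$ of size $d$ by hypothesis, at least one of them, say $V$, avoids $v$. Because $V$ is independent in $G$, every clique of $G$ meets $V$ in at most one vertex; in particular, any clique of $\mathcal C$ that contains $v$ covers at most one of the $d$ edges joining $v$ to $V$. Since all $d$ of these edges must be covered, at least $d$ cliques of $\mathcal C$ contain $v$, giving $\mathcal V_{\mathcal C}(v)\ge d$. Summing over the $n$ vertices yields $\scc(G)=\sum_{v\in V(G)}\mathcal V_{\mathcal C}(v)\ge nd$.

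For the matching upper bound when $d$ is a prime power and $n\le d(d+1)$, the plan is to construct an explicit clique partition of $G$ of total size $nd$ from an orthogonal array $\mathrm{OA}(d,d+1)$, which exists whenever $d$ is a prime power. Pad each part of $G$ up to size $d$ by adding dummy vertices, and if fewer than $d+1$ parts are present, introduce further dummy parts of size $d$, so that the enlarged graph $\widetilde G$ is the complete multipartite graph with exactly $d+1$ parts of size $d$. Label the vertices of the $i$-th part as $(i,1),\dots,(i,d)$, and for each of the $d^2$ rows $(r_1,\dots,r_{d+1})$ of $\mathrm{OA}(d,d+1)$ form the clique $\{(i,r_i):1\le i\le d+1\}$ of $\widetilde G$. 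The defining property of the orthogonal array---each pair of columns realises every ordered pair of entries exactly once---is precisely the statement that every edge of $\widetilde G$ lies in exactly one of these cliques. Restricting each of the $d^2$ cliques to $V(G)$ gives an edge-disjoint family of cliques of $G$ whose union covers every edge of $G$ (each edge of $G$ joins two real vertices and is therefore untouched by restriction), hence a clique partition of $G$. Since each value $1,\dots,d$ occurs exactly $d$ times in every column of the OA, each real vertex of $G$ lies in exactly $d$ of the restricted cliques, so the total clique size is $\sum_{v\in V(G)} d=nd$. Combined with the lower bound and $\scc(G)\le\scp(G)$, this forces $\scc(G)=\scp(G)=nd$.

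The main obstacle is executing the upper-bound construction cleanly: one has to verify that the restriction of the OA partition to $V(G)$ remains an edge-disjoint clique cover of $G$ (this uses the pair-uniformity of $\mathrm{OA}(d,d+1)$) and that the restricted sizes sum to exactly $nd$ (this uses the value-uniformity of each column). The numerical hypothesis $n\le d(d+1)$ is precisely the bound that lets one fit $G$ inside the $(d+1)$-part, $d$-regular Turán-type supergraph on which the orthogonal array operates.
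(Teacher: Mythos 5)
Your proposal is correct and follows essentially the same route as the paper: the lower bound comes from observing that each vertex sees an independent set of size $d$ (a full-size part avoiding it) and hence has valency at least $d$ in any clique covering, and the upper bound embeds $G$ into the complete $(d+1)$-partite graph with parts of size $d$ and reads off a clique partition from the rows of an $\mathrm{OA}(d,d+1)$, restricting to $V(G)$. The only additions are your explicit verification of $\Delta(\overline G)=d-1$ and of the column-uniformity giving valency exactly $d$, both of which the paper leaves implicit.
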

\begin{proof}{
Let $\mathcal{C}$ be a clique covering for $G$. For every vertex $u$, $N_G(u)$ contains a stable set (a set of pairwise nonadjacent vertices) of size $d$. Therefore, $u$ is contained in at least $d$ cliques of $\mathcal{C}$, i.e. the valency of $u$, $\mathcal{V}_{\mathcal{C}}(u)$ is at least $d$. 
Thus, $\scc(G)\geq nd$.

 Now, let $ d $ be a prime power. It is known that there exists an orthogonal array   OA$(d,d+1)$. Let $k=d+1$ and denote the $i$th row of the orthogonal  array  by $a_{i1},a_{i2},\ldots, a_{ik}$. Also, let $H$ be a complete $k-$partite graph on $d(d+1)$ vertices with the parts $V_1,\ldots, V_k$, where $V_j=\{v_{j1},\ldots, v_{jd}\}$, for $1\leq j\leq k$. 
 For each $i\in\{1,\ldots, d^2\}$, the set $C_i:=\{v_{1a_{i1}},v_{2a_{i2}},\ldots, v_{ka_{ik}}\}$ is a clique of $H$. Since in every two columns of OA, each ordered pair $(i,j)$, $1\leq i,j \leq d$, appears exactly once, the collection $\mathcal{C}:=\{C_i\ :\ 1\leq i\leq d^2\}$ forms a clique partition for $H$. Moreover, for every vertex $u\in V(H)$, $\mathcal{V}_{\mathcal{C}}(u)= d$. On the other hand, $G$ is an induced subgraph of $H$. Thus, the collection $\mathcal{C}':=\{C_i\cap V(G)\ :\ 1\leq i\leq d^2\}$ is a clique partition of $G$ and for every vertex $u\in V(G)$, $\mathcal{V}_{\mathcal{C}'}(u)$ is at most $ d$.
 Hence, $\scc(G)\leq\scp(G)\leq nd$.
}\end{proof}

For positive integers $ t,d $, let us denote the complete $ t $-partite graph with each part of size $d$ by $K_t(d)$. Theorem~\ref{thm:main} asserts that $ \scc(K_t(d))\leq cd^2 t\log t $, for some constant $ c $. Although Theorem~\ref{prop:turan} says that $\scc(K_t(d))=d^2t$ when $t\leq (d+1)$ and $d$ is a prime power, we believe that $\scc(K_t(d))$ is much larger when $t$ is sufficiently large. This leads us to the following conjecture.

\begin{conjecture} \label{con:knd}
There exists a function $f$ and a constant $c$, such that for every positive integers $t$ and $d$, if $t\geq f(d)$, then $\scc(K_t(d))\geq cd^2t\log t$.
\end{conjecture}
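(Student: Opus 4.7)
The plan is to generalize the classical Bollob\'as set-pair inequality used for $d=2$ in Section~\ref{sec:ctp} to $d$-tuples of pairwise disjoint sets. Given a clique covering $\mathcal{C}=\{C_1,\ldots,C_k\}$ of $K_t(d)$, for each part $V_j=\{v_{j,1},\ldots,v_{j,d}\}$ and each $r\in[d]$, I would set $L_{j,r}:=\{i\colon v_{j,r}\in C_i\}$ and $U_j:=\bigcup_{r=1}^{d}L_{j,r}$. Since every $C_i$ is a transversal of the $t$-partition, the sets $L_{j,1},\ldots,L_{j,d}$ are pairwise disjoint for fixed $j$; since every edge $v_{j,r}v_{j',s}$ with $j\neq j'$ must be covered, $L_{j,r}\cap L_{j',s}\neq\emptyset$ whenever $j\neq j'$; and $\scc(K_t(d))=\sum_{j=1}^{t}|U_j|$.

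The next step is to establish the multipartite Bollob\'as-type inequality
\[
\sum_{j=1}^{t}\frac{d!}{\binom{|U_j|}{|L_{j,1}|,\ldots,|L_{j,d}|}}\leq 1
\]
by a random-permutation argument. For a uniformly random linear order $\sigma$ on $\bigcup_j U_j$ and each $j$, let $E_j$ be the event that $\sigma$ arranges the sets $L_{j,1},\ldots,L_{j,d}$ as $d$ consecutive ``blocks'' in some order $\pi_j$ (which may depend on $\sigma$). Then $\Pr(E_j)$ equals the summand above. Pairwise disjointness of the $E_j$ follows by an exchange argument: if $E_j$ and $E_{j'}$ both held with block orders $\pi,\pi'$, then by the cross-intersection property one can pick $x\in L_{j,\pi(2)}\cap L_{j',\pi'(1)}$ and $y\in L_{j,\pi(1)}\cap L_{j',\pi'(2)}$, and these two elements simultaneously force $\sigma(y)<\sigma(x)$ (from the $j$-order) and $\sigma(x)<\sigma(y)$ (from the $j'$-order), a contradiction. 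Combining the inequality with the crude bound $\binom{|U_j|}{|L_{j,1}|,\ldots,|L_{j,d}|}\leq d^{|U_j|}$ and Kraft's inequality would then yield
\[
\scc(K_t(d)) \;\geq\; t\log_d(t\,d!) \;=\; \Omega\!\left(\frac{t\log t}{\log d}+td\right).
\]

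\textbf{The main obstacle.} The bound just derived matches the conjectured $c d^2 t\log t$ only for $d=2$, where it specializes to the tight bound $\sim t\log t$ proved in Section~\ref{sec:ctp}. For general $d$ there remains a multiplicative gap of order $d^2\log d$, and closing it is the crux of the difficulty. One avenue is to strengthen the inequality above by exploiting more of the $d$-tuple structure than just its multinomial profile --- for instance by nesting several pair-orderings inside a single $d$-ordering. Another is to reduce the conjecture to a lower bound on the minimum size of a strength-$2$ covering array on $t$ columns over an alphabet of size $d$, for which Kleitman--Spencer-type arguments are known to deliver the desired order when no entries are missing. The genuine challenge in the second route is to handle the ``wildcard'' entries that correspond to cliques failing to meet some parts, since these can in principle be used to sparsify the system in ways not ruled out by the above techniques.
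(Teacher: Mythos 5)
The statement you are asked to prove is a \emph{conjecture} that the paper leaves open: the authors prove only the case $d=2$ (Theorem~\ref{thm:ctp}, via Bollob\'as' set-pair inequality) and in Section~\ref{sec:cr} they reformulate the general case as a set-system problem (Conjecture~\ref{con:set}) which is, almost verbatim, the framework you set up — your families $(L_{j,1},\ldots,L_{j,d})$ with $L_{j,r}\cap L_{j,s}=\emptyset$ inside a part and $L_{j,r}\cap L_{j',s}\neq\emptyset$ across parts are exactly the tuples $(A_i^1,\ldots,A_i^d)$ of that conjecture. So there is no proof in the paper to match yours against; the question is only whether your argument closes the problem, and by your own admission it does not.

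Your multipartite permutation inequality itself looks sound: the block-event probability is $d!\,/\binom{|U_j|}{|L_{j,1}|,\ldots,|L_{j,d}|}$ (the $d!$ orderings give distinct events since each $L_{j,r}$ is nonempty), and the exchange argument using only the first two blocks of each of $E_j$ and $E_{j'}$ correctly forces disjointness from the symmetric cross-intersection property. For $d=2$ this even slightly sharpens the paper's bound (a factor $2$ inside the binomial, because the symmetric hypothesis lets you count both block orders). The genuine gap is the one you name: the resulting bound $\scc(K_t(d))=\Omega\bigl(t\log t/\log d+td\bigr)$ is smaller than the conjectured $cd^2t\log t$ by a factor of order $d^2\log d$ on the logarithmic term, and this is not an artifact of the crude estimate $\binom{|U_j|}{|L_{j,1}|,\ldots,|L_{j,d}|}\leq d^{|U_j|}$ — even the exact multinomial inequality, in the balanced case $|L_{j,r}|=|U_j|/d$, only forces $|U_j|\gtrsim\log_d t+d$ per part. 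A single ``blocks in some order'' event per index $j$ simply cannot encode the $\binom{d}{2}$ separate cross-intersecting pair systems living between each pair of parts, which is presumably where the extra factor of $d^2$ must come from. Your two suggested escape routes (nested pair-orderings, reduction to covering arrays with wildcards) are reasonable directions, but as you say, neither is carried out, so the conjecture remains open; your contribution is a correct but strictly weaker lower bound than the one claimed.
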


In fact, if Conjecture~\ref{con:knd} is correct, then the upper bound in \eqref{eq:scc} is best possible up to a constant factor, at least for sufficiently large $n$. In the following section, we will prove that  Conjecture~\ref{con:knd} is true for $d=2$.
\section{Cocktail Party Graphs} \label{sec:ctp}
In this section, we investigate the sigma clique cover number of the Cocktail party graph $ K_t(2) $.
Given a positive integer $t$, the Cocktail party graph $K_t(2)$ is obtained from the complete graph $K_{2t}$ with the vertex set $\{x_1,\ldots ,x_t\}\cup\{y_1,\ldots ,y_t\}$ by removing all the edges $x_iy_i$, $1\leq i\leq t$. 
%If $n$ is an odd positive integer, then $T_n$ is obtained from $T_{n+1}$ by removing a single vertex.

Various clique covering parameters of the Cocktail party graphs have been studied in the literature. 
In $ 1977 $, Orlin \cite{orlin77} asked about asymptotic behaviour of $ \cc(K_t(2)) $, with this motivation that it arises in an optimization problem in Boolean functions theory. He also conjectured that $\cp(K_t(2))\sim t $. Gregory et al. \cite{gregory86} proved that for $ t\geq 4 $, $ \cp(K_t(2))\geq 2t$ and for large enough $ t $, $ \cp(K_t(2))\leq 2t\log\log 2t$. The problem that $ \cp(K_t(2))\sim 2t $ is still an open problem. 
Moreover, Gregory and Pullman \cite{gregory81},  by applying a Sperner-type theorem of Bollob{\'a}s and Sch{\"o}nheim on set systems, proved that for every integer $t$, $ \cc(K_t(2))=\sigma(t) $, where
\begin{equation*}\label{eq:t}
\sigma(t)=\min\left\{k : t \leq \binom{k-1}{\lceil k/2\rceil} \right\}.
\end{equation*}
Furthermore, the authors in \cite{sigma2}, using the pairwise balanced designs, have proved that $\scp(K_t(2))\sim (2t)^{3/2}$.

%Now, for every integer $n$, define
%\begin{equation*}\label{eq:t}
%\delta(n)=\min\left\{k : n \leq \binom{k}{\lfloor k/2\rfloor} \right\},
%\end{equation*}
%and note that $|\sigma(n)-\delta(n)|\leq 1$.

Here, using the following well-known theorem by Bollob{\'a}s, we prove a lower bound for the sigma clique cover number of $ K_t(2) $  which determines the asymptotic behaviour of $\scc(K_t(2))$ and   implies that Conjecture~\ref{con:knd} is true for $ d=2 $. 

%\begin{thm}
{\textbf{Bollob{\'a}s' Theorem.}\cite{bollobas}}
{\em
Let $A_1, \ldots , A_t$ be some sets of size $a_1,\ldots, a_t$, respectively and
$B_1, \ldots , B_t$ be some sets of size $b_1,\ldots, b_t$, respectively, such that $A_i \cap B_j = \emptyset$ if and only if $i = j$. Then
\[\sum_{i=1}^t \binom{a_i+b_i}{a_i}^{-1} \leq 1.\]
}
%\end{thm}

\begin{theorem}\label{thm:ctp}
Let $K_t(2)$ be the Cocktail party graph on $2t$ vertices. Then
\[t\delta(t)\leq \scc(K_t(2))\leq t\sigma(t),\]
where $\sigma(t)$ is defined as above and $\delta(t)=\min\left\{k-1 : t \leq \binom{k}{\lceil k/2\rceil} \right\}$.
\end{theorem}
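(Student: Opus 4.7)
The plan is to translate clique coverings of $K_t(2)$ into pairs of set systems in the standard way and then attack the two inequalities separately: the upper bound by an explicit construction (essentially the one Gregory and Pullman used for the unweighted $\cc$), and the lower bound via Bollob{\'a}s' Theorem combined with a convexity argument.

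First, I would identify any clique covering $\mathcal{C}=\{C_1,\dots,C_k\}$ of $K_t(2)$ with the vertex labelling $S_v:=\{j:v\in C_j\}\subseteq[k]$, so that $\sum_{C\in\mathcal{C}}|C|=\sum_v|S_v|$. The covering condition reads $S_{x_i}\cap S_{y_i}=\emptyset$ for each $i$ (the non-edges) and $S_u\cap S_w\neq\emptyset$ for every edge $uw$ of $K_t(2)$. Writing $a_i=|S_{x_i}|$, $b_i=|S_{y_i}|$, and $c_i=a_i+b_i$, the parameter $\scc(K_t(2))$ is the minimum of $\sum_{i=1}^{t}c_i$ over all such assignments.

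For the upper bound, I would take $k=\sigma(t)$, pick $t$ distinct $\lceil k/2\rceil$-subsets $A_1,\dots,A_t$ of $[k-1]$ (possible by the definition of $\sigma(t)$), and set $S_{x_i}:=A_i$, $S_{y_i}:=[k]\setminus A_i$. A short verification using $2\lceil k/2\rceil>k-1$ and the distinctness of the $A_i$'s shows that every forbidden intersection is empty while every required one is non-empty; since every $c_i$ equals $k$, this gives $\sum_i c_i=t\sigma(t)$.

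For the lower bound, I would apply Bollob{\'a}s' Theorem to $A_i:=S_{x_i}$, $B_i:=S_{y_i}$, which satisfy $A_i\cap B_j=\emptyset$ if and only if $i=j$. This yields $\sum_{i=1}^t\binom{c_i}{a_i}^{-1}\leq 1$, and since $\binom{c_i}{a_i}\leq\binom{c_i}{\lceil c_i/2\rceil}$, we obtain $\sum_i g(c_i)\leq 1$ for $g(c):=\binom{c}{\lceil c/2\rceil}^{-1}$. Assuming $g$ (or rather its piecewise-linear extension to reals) is convex and strictly decreasing on $[2,\infty)$, Jensen's inequality gives $g(\bar c)\leq 1/t$ with $\bar c=(\sum_i c_i)/t$. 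Writing $k^{*}=\delta(t)+1$, the defining minimality of $k^{*}$ gives $g(k^{*}-1)>1/t$, so monotonicity forces $\bar c>k^{*}-1=\delta(t)$, hence $\scc(K_t(2))=\sum_i c_i>t\delta(t)$.

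The main obstacle I anticipate is verifying the convexity $g(c-1)+g(c+1)\geq 2g(c)$ for $c\geq 3$. I would split on the parity of $c$ and use Pascal's identity to express consecutive central binomial coefficients multiplicatively in terms of the central one at $c$ (e.g.\ for even $c=2m$, $g(c-1)=2g(c)$ and $g(c+1)=\frac{m+1}{2m+1}g(c)$, and analogously for odd $c$), after which convexity reduces to an elementary linear inequality in $m$. Everything else is a routine combination of the set-system dictionary for clique coverings with Bollob{\'a}s' Theorem.
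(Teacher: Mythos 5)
Your proposal is correct and follows essentially the same route as the paper: the lower bound is exactly the paper's argument (encode the covering as set systems $A_i=S_{x_i}$, $B_i=S_{y_i}$, apply Bollob\'as' Theorem, pass to central binomial coefficients, and use convexity plus Jensen to bound the average valency by $\delta(t)$), and your convexity verification fills in a step the paper merely asserts. For the upper bound the paper simply cites $\cc(K_t(2))=\sigma(t)$ together with the fact that every clique of $K_t(2)$ has size at most $t$, whereas you rebuild the underlying Gregory--Pullman covering explicitly; both yield $t\sigma(t)$ and your verification of the construction is sound.
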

\begin{proof}
Since $\cc(K_t(2))=\sigma(t)$ and every clique in $K_t(2)$  is of size at most $t$, we have $\scc(K_t(2))\leq t\sigma(t)$.

For the lower bound, assume that $\{C_1,\ldots, C_{k}\}$ is an arbitrary clique covering for $K_t(2)$. For every $ i \in\{1,\ldots,t\}$, define
\[A_i=\{a\ :\ x_i\in C_a\},\quad B_i=\{a\ :\ y_i\in C_a\}.\]
Also, let $a_i=|A_i|, b_i=|B_i|$ and $c_i=a_i+b_i$. Then for every $i\neq j$, there exists a clique containing the edge $x_iy_j$. Hence, $A_i\cap B_j\neq \emptyset$. Moreover, since no clique contains both vertices $x_i$ and $y_i$, we have $A_i\cap B_i=\emptyset$.

Therefore, by Bollob{\'a}s' theorem, we have
\[\sum_{i=1}^t \binom{a_i+b_i}{a_i}^{-1} \leq 1.\]
For every integer $m$, let $f(m)=\binom{m}{\lceil m/2\rceil}^{-1}$ and $f(x)$ be the linear extension of $f(m)$ in $\mathbb{R}^+$.
Since $f$ is  non-increasing and convex, by Jensen inequality, we have
\[
f\left(\left\lceil \frac{1}{t}\sum_{i=1}^t c_i\right\rceil\right) \leq f\left(\frac{1}{t}\sum_{i=1}^t c_i\right) \leq \frac{1}{t} \sum_{i=1}^t \binom{c_i}{\lceil c_i/2\rceil}^{-1}
\leq \frac{1}{t}\sum_{i=1}^t \binom{a_i+b_i}{a_i}^{-1} \leq  \frac{1}{t}.
\]
Thus,
 $\displaystyle\binom{\lceil\frac{1}{t}\sum_{i=1}^t c_i\rceil}{\lceil\frac{1}{2t}\sum_{i=1}^t c_i\rceil}\geq t$. Therefore, 
\[  \delta(t) \leq \left\lceil\frac{1}{t}\sum_{i=1}^t c_i\right\rceil-1 \leq \frac{1}{t}\sum_{i=1}^t c_i =\frac{1}{t}\sum_{a=1}^k |C_a|.\]
Consequently, $t\delta(t)\leq \scc(K_t(2))$.
\end{proof}
Theorem~\ref{thm:ctp} along with the approximation $\binom{2n}{n} \sim {2^{2n}}/{\sqrt{\pi n}}$ yields the following corollary which proves  Conjecture~\ref{con:knd} for $ d=2 $.
\begin{cor}
For every integer $ t $, $\scc(K_t(2))\sim  t\log t$.
\end{cor}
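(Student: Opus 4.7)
The plan is to combine the sandwich bound from Theorem~\ref{thm:ctp}, namely $t\delta(t)\le \scc(K_t(2))\le t\sigma(t)$, with asymptotic estimates showing that both $\sigma(t)$ and $\delta(t)$ equal $\log_2 t+o(\log t)$. Once this is in hand, dividing by $t\log t$ and passing to the limit in the sandwich yields the claim.

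First, I would use the central binomial approximation $\binom{2n}{n}\sim 4^n/\sqrt{\pi n}$ (together with the fact that $\binom{k-1}{\lceil k/2\rceil}$ differs from $\binom{k}{\lceil k/2\rceil}$ by at most a factor of $2$) to show that both $\binom{k-1}{\lceil k/2\rceil}$ and $\binom{k}{\lceil k/2\rceil}$ are $\Theta(2^k/\sqrt{k})$. It is convenient to split into the cases $k$ even and $k$ odd, so that the ceiling resolves cleanly; in each case Stirling's formula delivers the same leading-order behavior up to a constant factor.

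Next, I would invert these estimates. For $\sigma(t)$, the defining inequality $t\le \binom{k-1}{\lceil k/2\rceil}$ becomes, after taking base-$2$ logarithms, $\log_2 t\le k-\tfrac{1}{2}\log_2 k+O(1)$, so the minimum such integer $k$ satisfies $\sigma(t)=\log_2 t+O(\log\log t)$. An identical calculation applied to $\binom{k}{\lceil k/2\rceil}$ yields $\delta(t)=\log_2 t+O(\log\log t)$. In particular, both $\sigma(t)/\log_2 t\to 1$ and $\delta(t)/\log_2 t\to 1$ as $t\to\infty$.

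Finally, multiplying through in Theorem~\ref{thm:ctp} and dividing by $t\log_2 t$ gives $\scc(K_t(2))/(t\log t)\to 1$, which is precisely $\scc(K_t(2))\sim t\log t$. The only mildly delicate point is the careful inversion producing the $O(\log\log t)$ correction, but because we only need the weaker $o(\log t)$ error to conclude, no refined argument is necessary; the proof is essentially a one-step consequence of Theorem~\ref{thm:ctp} and Stirling.
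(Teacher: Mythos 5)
Your proposal is correct and follows exactly the route the paper intends: the paper derives the corollary from the sandwich $t\delta(t)\leq \scc(K_t(2))\leq t\sigma(t)$ of Theorem~\ref{thm:ctp} together with the central binomial approximation $\binom{2n}{n}\sim 2^{2n}/\sqrt{\pi n}$, which is precisely your inversion showing $\sigma(t)$ and $\delta(t)$ are both $\log_2 t+O(\log\log t)$. The only point worth noting is that the conclusion is really $\scc(K_t(2))\sim t\log_2 t$, so the statement implicitly takes $\log$ to base $2$ (a convention issue shared with the paper, not a gap in your argument).
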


\section{Concluding Remarks}\label{sec:cr}
In previous section, by considering a clique covering as a set system and applying Bollob{\'a}s' theorem, we proved  Conjecture~\ref{con:knd} for $ d=2 $. In this point of view, this conjecture can be restated as an interesting set system problem and thus it can be viewed as a generalization of Bollob{\'a}s' theorem, as follows. 

\begin{conjecture}\label{con:set}
Let $d\geq 2$, $t\geq 1$ and $\mathcal{F}= \{(A^1_i, A^2_i,\ldots, A^d_i) \ :\  1\leq i\leq t \} $ such that $ A^j_i $ is a set of size $ k_{ij} $ and $ A^j_i \cap A^{j'}_{i'}=\emptyset $ if and only if $ i= i' $ and $ j\neq j' $. Then, there exists a function $ f $ and a constant $ c $, such that for every $ t\geq f(d) $, 
\[\sum_{i,j}{k_{ij}} \geq c d^2 t\log t. \]
\end{conjecture}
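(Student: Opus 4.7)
The plan is to generalise the Bollob\'{a}s set-pair inequality used in Section~\ref{sec:ctp} to $d$-tuples of pairwise disjoint sets and then apply a convexity argument. I would fix a uniformly random linear order $\pi$ of the ground set $\bigcup_{i,j}A^j_i$, and for each $i\in\{1,\ldots,t\}$ and each permutation $\sigma\in S_d$ let $E_{i,\sigma}$ denote the event that, along $\pi$, every element of $A^{\sigma(1)}_i$ precedes every element of $A^{\sigma(2)}_i$, which in turn precedes every element of $A^{\sigma(3)}_i$, and so on. Writing $s_i:=\sum_{j=1}^d k_{ij}$, a routine counting argument gives $\Pr(E_{i,\sigma})=\binom{s_i}{k_{i1},\ldots,k_{id}}^{-1}$.

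The key combinatorial step is the pairwise disjointness of all the events $E_{i,\sigma}$. Within a fixed $i$ this is clear because distinct $\sigma$'s prescribe distinct orderings of the blocks. For $i\neq i'$ I would pick $x\in A^{\sigma(1)}_i\cap A^{\sigma'(d)}_{i'}$ and $y\in A^{\sigma(d)}_i\cap A^{\sigma'(1)}_{i'}$, both of which exist by the hypothesis of Conjecture~\ref{con:set} because $i\neq i'$; then $E_{i,\sigma}$ forces $x<y$ in $\pi$ while $E_{i',\sigma'}$ forces $y<x$, a contradiction. Summing $\Pr(E_{i,\sigma})$ over all $i$ and all $\sigma$ yields the multinomial Bollob\'{a}s-type inequality
\[\sum_{i=1}^{t}\frac{d!}{\binom{s_i}{k_{i1},\ldots,k_{id}}}\;\leq\;1.\]
Combining the estimate $\binom{s}{k_1,\ldots,k_d}\leq d^{s}$ with the convexity of $s\mapsto d^{-s}$ and Jensen's inequality then produces $\bar{s}:=\tfrac{1}{t}\sum_i s_i\geq\log_d(t\cdot d!)$, which gives $\sum_{i,j}k_{ij}\geq (t\log t)/\log d+\Omega(td)$ and already recovers the correct order of magnitude for the case $d=2$ covered in Theorem~\ref{thm:ctp}.

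The principal obstacle is to close the remaining factor of order $d^{2}\log d$ separating this bound from the conjectured $c\,d^{2}t\log t$. The multinomial inequality above witnesses only a single intersection per pair of classes, whereas the hypothesis in fact supplies $d^{2}$ distinct witnessing elements between any two classes. To exploit this, I would augment the argument with (i) the classical Bollob\'{a}s theorem applied to every non-trivial bipartition $\{1,\ldots,d\}=L\sqcup L^{c}$ of the labels, yielding the $2^{d}-2$ constraints $\sum_{i=1}^{t}\binom{s_i}{\sum_{j\in L}k_{ij}}^{-1}\leq 1$, and (ii) the pair-covering estimate $\sum_{x}|T_x|(|T_x|-1)\geq d^{2}\,t(t-1)$, where $T_x\subseteq\{1,\ldots,t\}$ is the set of classes containing the ground-set element $x$; this follows because each of the $\binom{t}{2}$ coordinate pairs must realise all $d^{2}$ label combinations. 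I expect the decisive step to be an entropy-flavoured averaging of these families of inequalities that forces $\bar{s}=\Omega(d^{2}\log t)$; my guess is that closing this gap requires an ingredient beyond the Bollob\'{a}s framework, mirroring the way Alon's random clique argument in Theorem~\ref{thm:main} simultaneously exploits all $d$ labels rather than merely pairs of them.
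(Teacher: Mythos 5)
This statement is presented in the paper as Conjecture~\ref{con:set}, an open problem; the paper supplies no proof of it and only remarks that the case $d=2$ follows from Bollob\'as' theorem. So the relevant question is whether your argument actually settles the conjecture, and by your own account it does not. The part you do carry out is correct and worthwhile: the random-linear-order argument, the computation $\Pr(E_{i,\sigma})=\binom{s_i}{k_{i1},\ldots,k_{id}}^{-1}$, and the disjointness of the events $E_{i,\sigma}$ (both within a fixed $i$ and across $i\neq i'$ via the two witnessing elements $x\neq y$) all check out, yielding the valid multipartite Bollob\'as-type inequality $\sum_i d!\,\binom{s_i}{k_{i1},\ldots,k_{id}}^{-1}\leq 1$, which for $d=2$ reproduces the bound underlying Theorem~\ref{thm:ctp}.

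The genuine gap is exactly the one you flag: the inequality above, combined with $\binom{s}{k_1,\ldots,k_d}\leq d^{s}$ and Jensen, gives only $\sum_{i,j}k_{ij}\gtrsim t\log t/\log d + td\log d$, which is smaller than the conjectured $c\,d^{2}t\log t$ by a factor of order $d^{2}\log d$ --- for large $d$ your bound is actually weaker relative to $t\log t$ than the $d=2$ case. The supplementary ingredients you propose do not close this. Item (ii) is correct as stated (each ordered pair $(i,i')$ needs $d^{2}$ distinct witnesses since $x$ lies in at most one $A^j_i$ for fixed $i$), but on its own it only yields $\sum_{i,j}k_{ij}\geq d^{2}t$ via $|T_x|\leq t$, i.e., the analogue of the $\scc(G)\geq nd$ bound of Theorem~\ref{prop:turan}, with no $\log t$ factor; item (i) gives exponentially many constraints but no mechanism for combining them is given. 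The ``entropy-flavoured averaging'' that would merge the $d^{2}t$ bound with the $t\log t$ bound multiplicatively rather than additively is precisely the missing idea, and you acknowledge it is a guess. So this is an honest partial attack on an open conjecture, not a proof; the statement remains unproved for every $d\geq 3$.
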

Note that Conjecture~\ref{con:set} is true for $d=2$, due to Bollob{\'a}s' theorem.
%____________________________________________________


\begin{thebibliography}{10}

\bibitem{alon}
N.~Alon.
\newblock Covering graphs by the minimum number of equivalence relations.
\newblock {\em Combinatorica}, 6(3):201--206, 1986.

\bibitem{bollobas}
B.~Bollob{\'a}s.
\newblock On generalized graphs.
\newblock {\em Acta Math. Acad. Sci. Hungar.}, 16:447--452, 1965.

\bibitem{chung81}
F.~R.~K Chung.
\newblock On the decomposition of graphs.
\newblock {\em SIAM J. Algebraic Discrete Methods}, 2:1--12, 1981.

\bibitem{chung}
F.~R.~K. Chung, P.~Erd{\H{o}}s, and J.~Spencer.
\newblock On the decomposition of graphs into complete bipartite subgraphs.
\newblock In {\em Studies in pure mathematics}, pages 95--101. Birkh\"auser,
  Basel, 1983.

\bibitem{sigma2}
{A. Davoodi, R. Javadi and B. Omoomi.}
\newblock{ Pairwise balanced designs and sigma clique partitions}. 
\newblock{\em ArXiv 1411.0266}.


\bibitem{erdos66}
P.~Erd{\H{o}}s, A.~W. Goodman, and L.~P{\'o}sa.
\newblock The representation of a graph by set intersections.
\newblock {\em Canad. J. Math.}, 18:106--112, 1966.


 \bibitem{gyori80}
 {E. Gy{\H{o}}ri and Kostochka, A. V.}
 \newblock{On a problem of {G}. {O}. {H}. {K}atona and {T}. {T}arj\'an}. 
 \newblock{\em Acta Math. Acad. Sci. Hungar.}, 34:321--327, 1980.


 \bibitem{gregory86}
 {D. A. Gregory, S. McGuinness and W. Wallis.}
 \newblock{ Clique partitions of the cocktail party graph}. 
 \newblock{\em Discrete Math.}, 59:267--273, 1986.

 \bibitem{gregory81}
 {D. A. Gregory and N. Pullman.}
 \newblock{ On a clique covering problem of orlin}. 
 \newblock{\em Discrete Math.}, 41:97--99, 1982.
%
 \bibitem{kahn81}
 {J. Kahn.}
 \newblock{ Proof of a conjecture of {K}atona and {T}arj\'an}. 
 \newblock{\em Period. Math. Hungar.}, 1:81--82, 1981.
 

\bibitem{monson95}
S.~D. Monson, N.~J. Pullman, and R.~Rees.
\newblock A survey of clique and biclique coverings and factorizations of
  {$(0,1)$}-matrices.
\newblock {\em Bull. Inst. Combin. Appl.}, 14:17--86, 1995.

\bibitem{orlin77}
{ J. Orlin.}
\newblock{ Contentment in graph theory: covering graphs with cliques}.
\newblock{ Nederl. Akad. Wetensch. Proc. Ser. A}, 39(5):406--424, 1977.

\bibitem{pullman82}
N.~J. Pullman.
\newblock Clique coverings of graphs---a survey.
\newblock In {\em Combinatorial mathematics, {X} {\rm(}{A}delaide, 1982{\rm)}},
  volume 1036 of {\em Lecture Notes in Math.}, pages 72--85. Springer, Berlin,
  1983.

\bibitem{Rees}
R.~Rees.
\newblock Minimal clique partitions and pairwise balanced designs.
\newblock {\em Discrete Math.}, 61(2-3):269--280, 1986.

\bibitem{tuza}
Z.~Tuza.
\newblock Covering of graphs by complete bipartite subgraphs: complexity of
  {$0$}-{$1$} matrices.
\newblock {\em Combinatorica}, 4(1):111--116, 1984.

\bibitem{Wallis87}
W.~D. Wallis.
\newblock Finite planes and clique partitions.
\newblock In {\em Finite geometries and combinatorial designs {\rm(}{L}incoln,
  {NE}, 1987{\rm)}}, volume 111 of {\em Contemp. Math.}, pages 279--285. Amer.
  Math. Soc., Providence, RI, 1990.

\end{thebibliography}
\end{document}